\newtheorem{theorem}{Theorem}[section]
\newtheorem{remark}{Remark}[section]
\theoremstyle{remark}
\newtheorem{example}{Example}[section]
\let\oldthebibliography\thebibliography
\renewcommand\thebibliography[1]{%
	\oldthebibliography{#1}%
	\setlength{\parskip}{\bibitemsep}%
	\setlength{\itemsep}{\bibparskip}%
}
\def\da{\dot{a}}
\def\dda{\ddot{a}}
\def\ddb{\ddot{b}}
\def\dc{\dot{c}}
\def\ddc{\ddot{c}}
\def\ddl{\ddot{\lambda}}
\def\ddm{\ddot{\mu}}
\numberwithin{equation}{section}
\title{An edge-type state integral over local field and A-polynomials}
\author{HONGHUAI FANG}
\date{}
\begin{document}

\maketitle

\begin{abstract}
To each local field, Garoufalidis and Kashaev recently associate a quantum dilogarithm that satisfies a pentagon identity and some symmetries. By employing an angled version of these quantum dilogarithms, they developed three generalized TQFTs, one given by a face state-integral and others by edge state integrals. These TQFTs produce distributional invariants for one-cusped three-manifolds, which are believed to be related to counting points on the A-polynomial curve. In this paper, we will calculate partition functions of an edge-type generalized TQFT over a local field for several examples and prove the appearance of A-polynomial in these new invariants.
\end{abstract}

\tableofcontents

\section{Introduction}
Analyzing quantum Chern-Simons theory\cite{witten89} gets a lot more complex and is less advanced when the structure group $G$ is not compact, unlike when $G$ is compact. Andersen and Kashaev introduced a new theory called Teichmüller TQFT\cite{Andersen2014ATF}, where the theory's operators work on infinite-dimensional vector
spaces. This version of TQFT has a partition function that creates a quantum invariant based on quantum Teichmüller theory. They built this TQFT using positivity on shape structure and calculations on distributions related to Faddeev's quantum dilogarithm\cite{faddeev1994quantum}.

Using the Weil transform of quantum dilogarithm, Anderson and Kashaev \cite{andersen2013new} suggested an edge-type state-integral model for the Teichmüller TQFT. In this model, the circle-valued state variables are associated with the edges of oriented shaped triangulations. The partition functions can be analytically continued to arbitrary complex shapes, liberating the theory from needing positive shape constraints. Also, like the Turaev-Viro model\cite{Kashaev2012ATO} and 3D index model\cite{Garoufalidis2017AME}, there are no restrictions on the topology of the cobordisms.

Recently, Garoufalidis and Kashaev\cite{garoufalidis2023quantum} studied a generalized quantum dilogarithm over a local field that satisfies a pentagon identity and some symmetries. Using an angled version of these quantum dilogarithms, they developed three generalized TQFTs, one given by a face state-integral and others by edge state integrals. These TQFTs produce distributional invariants for one-cusped three-manifolds, which are believed to be related to counting points on the A-polynomial curve\cite{cooper1994plane}. Furthermore, the partition function of an edge-type generalized TQFT can be expressed as a multidimensional Barnes-Mellin integral or a period on an A-polynomial curve, similar to what has been observed in mirror symmetry of Calabi-Yau manifolds.\cite{passare1996multiple}.

This paper will calculate partition functions of an edge-type generalized TQFT over local field $F$ defined in \cite{garoufalidis2023quantum} for several knot complements. We observe that they exhibit some exciting commonalities in their shapes as follows

\begin{equation}\label{the edge type partition function}
Z_F(S^3\backslash K)=\|\mathcal{F}_K(x,y)\|\delta_F(P_K(x,y)).  
\end{equation}
Furthermore, we will prove the following theorem.
\begin{theorem}
The function $P_K(x,y)$ coincides with the $\operatorname{PSL}(2,\mathbb{C})$ A-polynomial of knot $K$. 
\end{theorem}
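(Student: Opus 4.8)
The plan is to start from the explicit edge state-integral formula for $Z_F(S^3\setminus K)$ attached to an ideal triangulation $\mathcal{T}$ of the knot complement. After choosing an angle structure, the partition function of the edge-type generalized TQFT is an integral over $F$-valued state variables indexed by the edges of $\mathcal{T}$ of a product of the local-field quantum dilogarithms of Garoufalidis--Kashaev, one Boltzmann weight per tetrahedron, with the arguments of the dilogarithms being the affine combinations of edge variables dictated by the Neumann--Zagier data of $\mathcal{T}$. First I would record this formula and isolate the two structural facts about the local-field quantum dilogarithm that I need: (i) its Fourier/Gaussian transform identities, which let one integrate out a state variable appearing linearly in the argument of a weight, and (ii) the self-duality that turns a degenerate such integration into a factor $\delta_F$ of a linear form in the remaining variables.

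Next I would integrate out the edge variables one at a time, mirroring the reduction of the Neumann--Zagier matrix to normal form: choose a spanning tree of the edge--tetrahedron incidence graph and successively eliminate the interior edge variables, each elimination contributing either a norm factor $\|\cdot\|$ (a Gaussian/Jacobian determinant, these assembling into $\|\mathcal{F}_K(x,y)\|$) or a $\delta_F$ supported on one gluing equation. By the combinatorics of a one-cusped ideal triangulation ($n$ tetrahedra and $n$ edges, a single cusp), exactly one delta distribution survives after all interior eliminations, and the argument of this surviving $\delta_F$ is precisely the Laurent polynomial $P_K(x,y)$ obtained by eliminating every shape/edge variable from the full system of gluing equations together with the meridian--longitude completion equations, where $x$ and $y$ are the cusp holonomy variables.

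The final step identifies $P_K$ with the A-polynomial. The zero locus of $P_K(x,y)$ is, by construction, the closure of the image under the restriction-to-the-boundary-torus map of the gluing (deformation) variety of $\mathcal{T}$; by the results of Champanerkar and of Dimofte--Garoufalidis relating the gluing/Ptolemy variety to the $\operatorname{PSL}(2,\mathbb{C})$ character variety, this image is a union of components of the $\operatorname{PSL}(2,\mathbb{C})$ A-polynomial curve, and it contains the geometric component because an ideal triangulation of a hyperbolic knot complement realizes the discrete faithful representation as a solution of its gluing equations. To upgrade ``union of components'' to an equality I would check, on the examples computed in the paper, that no factors are missing and that spurious factors --- the abelian factor $y-1$, or repeated factors arising from the defining ideal being non-radical --- are absent from $P_K$, which is natural since the local-field delta distribution is supported on the reduced variety; in practice one also simply matches $P_K$ against the tabulated $\operatorname{PSL}(2,\mathbb{C})$ A-polynomials of figure-eight, $5_2$, and the other knots treated.

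The main obstacle I anticipate is precisely this last bookkeeping of extraneous factors and of which component the state integral actually sees. The partition function, assembled from quantum dilogarithms, is sensitive only to the reduced support of $\delta_F$ and naturally discards the abelian component, but proving \emph{in general} that $P_K$ is the full non-abelian $\operatorname{PSL}(2,\mathbb{C})$ A-polynomial --- rather than only the geometric component --- requires knowing that the chosen triangulation detects every irreducible component of the character variety; this is why the cleanest statement is the one verified term-by-term on the examples, with the general mechanism (elimination of Neumann--Zagier data yields the A-polynomial curve) providing the conceptual reason behind the coincidence asserted in the theorem.
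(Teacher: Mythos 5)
Your overall route---read off the equations imposed by the $\delta_F$'s in the tetrahedral weights, eliminate the edge variables, and then invoke a Champanerkar-type result to identify the support of the surviving delta with the $\operatorname{PSL}(2,\mathbb{C})$ A-polynomial curve---is the same as the paper's, but there is a genuine gap at the central step, which you assert rather than prove: that the argument of the surviving $\delta_F$ is ``precisely the Laurent polynomial obtained by eliminating every shape/edge variable from the gluing equations together with the cusp equations.'' In the $\mathsf{B}$-type state integral the constraints are \emph{not} the gluing equations in shape variables: each weight imposes an equation of the form $\ddb_j^{-1}X^{(j)}_{\alpha}+\dda_j X^{(j)}_{\beta}-X^{(j)}_{\gamma}=0$ in the multiplicative edge variables, whose coefficients are the circle-valued parts $\dda_j,\ddb_j$ of the chosen angle structure, while the cusp variables $\ddl,\ddm$ enter only through the balance conditions and the angle holonomies. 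The paper's proof consists almost entirely of bridging exactly this: constructing a rescaling $x_k=\eta_k y_k$ with $\vec E=(B'^{T})^{-1}(\vec Q-\vec A)$, whose existence and consistency rest on the Neumann--Zagier symplectic property of $(A'\ B')$ (so that $B'^{-1}A'$ is symmetric), the invertibility of $B'$ for a suitable choice of quad, and the identity $A'\vec A+B'\vec B=\mathbf 2-C\mathbf 1$ coming from the balance condition. Only after this change of variables do the edge variables drop out and the constraints become the standard shape relations $\dda_j\ddb_j\ddc_j=-1$ and $\dda_j+\ddb_j^{-1}-1=0$ for the angles, which together with the balance and holonomy equations form exactly the system covered by Theorem~\ref{champanerkar theorem}. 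Your phrase ``mirroring the reduction of the Neumann--Zagier matrix'' gestures at this but does not supply it, and without it the identification of the delta's support with the gluing variety does not follow. A smaller inaccuracy: there is no Gaussian integration to perform in this model, since each local-field weight is already of the form $\|x\|^{\dot c}\|z\|^{\dot a}\delta_F(x+z-1)$; the norm prefactors arise from manipulating delta functions, not from Jacobians of Fourier transforms.

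Your caution about extraneous or missing components is well placed and in fact matches the paper's own situation: the argument via Theorem~\ref{champanerkar theorem} only yields a \emph{factor} (divisor) of the $\operatorname{PSL}(2,\mathbb{C})$ A-polynomial, and full coincidence is verified on the computed examples $\mathbf{3}_1$, $\mathbf{4}_1$, $\mathbf{5}_2$ rather than proven in general. So that part of your plan is not a defect relative to the paper; the missing piece is the angle-to-shape change of variables described above, which is the actual content of the paper's proof.
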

See Section \ref{section 3} for more details.

\section{Teichmüller TQFT over local field}

\subsection{Quantum dilogarithm over local field}

 Denote by $\mathbb{T}$ the multiplicative group of complex numbers with absolute value 1. A topological group is called \textit{locally compact Abelian} (LCA) if the underlying topological space is locally compact, and Hausdorff and the underlying group is Abelian.
 For a LCA group $G$, its Pontryagin dual is the group $\widehat{G}$ of continuous group homomorphisms from $G$ to $\mathbb{T}$.

A \textit{Gaussian group} is a LCA group $\mathsf{A}$ equipped with a nondegenerate $\mathbb{T}$-valued quadratic form $\langle\cdot\rangle: \mathsf{A} \rightarrow \mathbb{T}$, i.e., a function that satisfies $\langle x\rangle=\langle-x\rangle$ for any $x \in \mathsf{A}$ and its polarization
$$
\langle x ; y\rangle:=\frac{\langle x+y\rangle}{\langle x\rangle\langle y\rangle}, \quad \forall(x, y) \in \mathsf{A}^2,
$$
is a non-degenerate bi-character. The form $\langle\cdot\rangle$ is called a \textit{Gaussian exponential} of $\mathsf{A}$, and the bi-character $\langle\cdot ; \cdot\rangle$ is called its \textit{Fourier kernel}.

A Gaussian group $\mathsf{A}$ has a canonically normalized Haar measure determined by the condition of the improper integral
$$
\int_{\mathsf{A}^2}\langle x ; y\rangle \mathrm{d}(x, y)=1 .
$$

A \textit{quantum dilogarithm} over a Gaussian group $\mathsf{A}$ is a tempered distribution over $\mathsf{A}$ $\varphi: \mathsf{A}\rightarrow \mathbb{C}$ that satisfies
\begin{enumerate}
    \item an \textit{inversion relation}: there exists a non-zero constant $c_{\varphi} \in \mathbb{C}^{\times}$such that
$$
\varphi(x) \varphi(-x)=c_{\varphi}\langle x\rangle
$$
for almost all $x \in \mathsf{A}$;
    \item a \textit{pentagon identity}:
$$
\varphi(x) \varphi(y)=\int_{\mathsf{A}}\langle x\rangle \mathrm{d}x  \int_{\mathsf{A}^3} \frac{\langle x-u ; y-w\rangle}{\langle u-v+w\rangle} \varphi(u) \varphi(v) \varphi(w) \mathrm{d}(u, v, w)
$$
for almost all pairs $(x, y) \in \mathsf{A}^2$.
\end{enumerate}

The pentagon identity can equivalently be written in the form of a distributional integral identity
$$
\tilde{\varphi}(x) \tilde{\varphi}(y)\langle x ; y\rangle=\int_{\mathsf{A}} \tilde{\varphi}(x-z) \tilde{\varphi}(z) \tilde{\varphi}(y-z)\langle z\rangle \mathrm{d} z,
$$
where
$$
\tilde{\varphi}(x):=\int_{\mathsf{A}} \frac{\varphi(y)}{\langle x ; y\rangle} \mathrm{d} y 
$$
is the inverse Fourier transformation of $\varphi$.

For example, for the Gaussian group $\mathbb{A}_N =\mathbb{R} \oplus \mathbb{Z} / N \mathbb{Z}$ with Gaussian exponential 
$$\langle(x, n)\rangle = e^{\pi i x^2} e^{-\pi i n(n+N) / N}$$
and Fourier kernel 
$$\langle(x,n);(y, m)\rangle = e^{2 \pi i x y} e^{-2 \pi i n m / N},$$
the level $N$ Faddeev's quantum dilogarithm $\mathrm{D}_{\mathrm{b}}(x, n)$ is a quantum dilogarithm\cite{andersen2014complex}. 

We recall some fundamental information about test functions and distributions on local fields, which are examples of LCA groups. We use $\mathcal{S}(F)$ to represent the Schwartz-Bruhat space of Fourier stable complex-valued test functions and $\mathcal{S}^{\prime}(F)$ to denote the dual space of tempered distributions. Note that the Fourier transform is an automorphism of the space $\mathcal{S}^{\prime}(F)$.

Fix a Haar measure $\mu_F$ on the additive group $(F,+)$ of a local field $F$ with the notation for the differential $\mathrm{d}_F x:=\mathrm{d} \mu_F(x)$. The multiplicative group $\mathsf{B}:=\left(F^{\times}, \times\right)$ is also a LCA group. We fix its Haar measure through the following relation for the differentials
$$
\mathrm{d}_{\mathsf{B}} x=\frac{\mathrm{d}_F x}{\|x\|}, \quad\|x\|:=\|x\|_F^d,
$$
where $d$ is the dimension of the field.

After Choosing the normalization of the Haar measure on $\hat{\mathsf{B}}$, the Dirac delta function $\delta_{\mathsf{B}}(x)$ is defined by 
$$
\delta_{\mathsf{B}}(x)=\int_{\hat{\mathsf{B}}} \alpha(x) \mathrm{d} \alpha.
$$

 Note that $\delta_{\mathsf{B}}(x)=0$ unless $x=1$. The relationship between delta functions $\delta_{\mathsf{B}}$ and $\delta_F$ on $\mathrm{B}$ and $F$ is
$$
\delta_{\mathsf{B}}(x)=\delta_F(x-1), \quad \forall x \in \mathsf{B} .
$$

We will denote elements of $\mathsf{B}$ by $x, y, \ldots$ and elements of $\hat{\mathsf{B}}$ by $\alpha, \beta, \ldots$ We will denote the canonical pairing $\hat{\mathsf{B}} \times \mathsf{B} \rightarrow \mathbb{T}$ by $(\alpha, x) \mapsto \alpha(x)$. The LCA groups $\mathsf{B}$ and $\hat{\mathsf{B}}$ can be combined to define a self-dual LCA group $\mathsf{A} =\hat{\mathsf{B}} \times \mathsf{B}$ which is a Gaussian group with Gaussian exponential
$$
\langle\cdot\rangle: \mathsf{A} \rightarrow \mathbb{T}, \quad\langle(\alpha, x)\rangle=\alpha(x)
$$
and the associated Fourier kernel
$$
\langle\cdot ; \cdot\rangle: \mathsf{A}^2 \rightarrow \mathbb{T}, \quad\langle(\alpha, x) ;(\beta, y)\rangle=\alpha(y) \beta(x).
$$
We define
$$
\varphi: \hat{\mathsf{B}} \times(\mathsf{B} \backslash\{-1\}) \subset \mathsf{A} \rightarrow \mathbb{T}, \quad \varphi(\alpha, x)=\alpha(1+x),
$$
which satisfies\cite{garoufalidis2023quantum} 
\begin{enumerate}
    \item the inversion relation: 
    $$\varphi(\alpha, x) \varphi(-\alpha, 1 / x)=\alpha(x);$$
    \item the pentagon identity:
    $$
\tilde{\varphi}(\alpha, x) \tilde{\varphi}(\beta, y) \alpha(y) \beta(x)=\int_{\mathsf{A}} \tilde{\varphi}(\beta-\gamma, y / z) \tilde{\varphi}(\gamma, z) \tilde{\varphi}(\alpha-\gamma, x / z) \gamma(z) \mathrm{d}(\gamma, z).
$$
\end{enumerate}
Thus, the function $\varphi$ is a quantum dilogarithm over $\mathsf{A}$.

We introduce an angled version $\Psi_{a, c}$ of the quantum dilogarithm $\varphi$, which satisfies some symmetry relations and an angle-dependent pentagon identity. The function $\Psi_{a, c}$ is the building block for the partition function of a tetrahedron. The relationship it satisfies will be used to show that the partition function of a triangulation is invariant under 2-3 Pachner moves, which makes it a topological invariant.

An angle $a=(\dot{a}, \ddot{a})$ is an element of $\mathsf{C}=\mathbb{R} \times \mathsf{B}$. Its conjugation is defined by
$$
\bar{a}:=\left(\dot{a},(\ddot{a})^{-1}\right) .
$$
When angles $a, b$, and $c$ are assigned to a tetrahedron, we assume that they satisfy
$$
a+b+c=\varpi:=(1,-1).
$$

For $a, c \in \mathsf{C}$, we define a function
$$
\Psi_{a, c}: \hat{\mathsf{B}} \times\left(\mathsf{B} \backslash\left\{\ddot{a}^{-1}\right\}\right) \rightarrow \mathbb{C}, \quad \Psi_{a, c}(\alpha, x)=\frac{1}{\alpha((1-\ddot{a} x) \ddot{c})} \frac{\|\ddot{a} x\|^{\dot{c}}}{\|1-\ddot{a} x\|^{1-\dot{a}}} ,
$$
 and denote by $\bar{\Psi}_{a, c}$ the function of $(\alpha, x) \in \hat{\mathsf{B}} \times(\mathsf{B} \backslash\{\ddot{a}\})$ defined by
\begin{equation}\label{general charged qd}
\bar{\Psi}_{a, c}(\alpha, x)=\overline{\Psi_{\bar{a}, \bar{c}}(\alpha, x)}=\alpha((1-x / \ddot{a}) / \ddot{c}) \frac{\|x / \ddot{a}\|^{\dot{c}}}{\|1-x / \ddot{a}\|^{1-\dot{a}}} .  
\end{equation}

The functions $\Psi_{a, c}$ and $\bar{\Psi}_{a, c}$ satisfy\cite{garoufalidis2023quantum}
\begin{enumerate}
    \item the symmetry relations:
    $$
\begin{aligned}
\Psi_{a, c}(-\alpha, 1 / x) \alpha(x) & =\bar{\Psi}_{a, b}(\alpha, x) ,\\
\int_{\hat{\mathsf{B}} \times \mathsf{B}} \Psi_{a, c}(\beta, y) \alpha(x / y) \beta(y / x) \mathrm{d}(\beta, y) & =\bar{\Psi}_{b, c}(\alpha, x);
\end{aligned}
$$
    \item the charged pentagon relation:
    \begin{equation}
         \begin{aligned}
        \Psi_{a_1,c_1}(\alpha, x) \Psi_{a_3,c_3}(\beta, y)=&\int_{\hat{\mathsf{B}} \times \mathsf{B}} \Psi_{a_0,c_0}(\alpha-\gamma, x / z) \Psi_{a_2,c_2}(\gamma, z) \Psi_{a_4,c_4}(\beta-\gamma, y / z)\\
        &\times \frac{\alpha(y / z) \beta(x / z)}{\gamma\left(x y / z^2\right)}\mathrm{d}(\gamma, z) 
        \end{aligned}   
    \end{equation}

provided that
$$
a_3=a_2+a_4, \quad c_3=a_0+c_4, \quad c_1=c_0+a_4, \quad a_1=a_0+a_2, \quad c_2=c_1+c_3 .
$$
\end{enumerate}

\begin{figure}[H]
    \centering
    \includegraphics[width=0.6\textwidth]{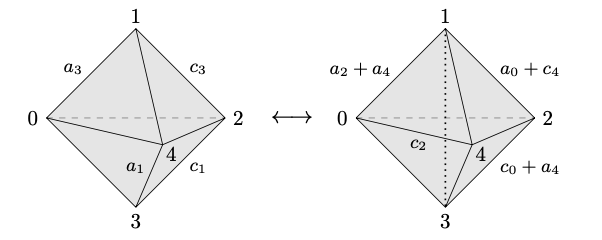}
    \caption{3-2 and 2-3 angled Pachner moves.}
    \label{img/angled pacher move}
\end{figure}

\subsection{Face-type Teichmüller TQFT over local field}
Consider the disjoint union of a finite number of a standard 3-simplex within $\mathbb{R}^3$. Each simplex is assumed to have ordered vertices, determining the orientation of its edges. Certain pairs of codimension one faces within this collection are glued together using an order-preserving, orientation-reversing affine homeomorphism. The quotient space yields a CW-complex $X$ called an oriented \textit{triangulated pseudo 3-manifold}. For $i \in\{0,1,2,3\}$, denote by $\Delta_i(X)$ the $i$-dimensional cell of $X$. For any $i>j$, define
$$
\Delta_i^j(X)=\left\{(a, b) \mid a \in \Delta_i(X), b \in \Delta_j(a)\right\} .
$$

The canonical boundary map
$$
\partial_i: \Delta_j(X) \rightarrow \Delta_{j-1}(X), \quad 0 \leq i \leq j
$$
is defined such that for a $j$-dimensional simplex $S=\left[v_0, v_1, \ldots, v_j\right]$ in $\mathbb{R}^3$ with ordered vertices $v_0, v_1, \ldots, v_j$,
$$
\partial_i S=\left[v_0, \ldots, v_{i-1}, v_{i+1}, \ldots, v_j\right], \quad i \in\{0, \ldots, j\}.
$$

For a tetrahedron $T=\left[v_0, v_1, v_2, v_3\right]$ with ordered vertices $v_0, v_1, v_2, v_3$, we define its sign by
$$
\operatorname{sign}(T)=\operatorname{sign}\left(\operatorname{det}\left(v_1-v_0, v_2-v_0, v_3-v_0\right)\right) .
$$

Let $X$ be an oriented triangulated pseudo 3-manifold. A $\mathsf{C}$-valued shape structure on $X$ is a function 
$$
\theta: \Delta_3^1(X) \rightarrow \mathsf{C}
$$
satisfies that the sum of angles at the edges emanating from each vertex of each tetrahedron equals $\varpi$.

We associate to $X$ the following \textit{kinematical kernal}
$$
K_X \in \mathcal{S}^{\prime}\left(\mathsf{A}^{\Delta_3(X)}\right), \quad K_X(z)=\int_{x \in \mathsf{A}^{\Delta_2(X)}}  \prod_{T \in \Delta_3(X)} K_T(x, z) \mathrm{d} x,
$$
where
$$
K_T(x, z):=\left\langle x_0 ; z(T)\right\rangle^{\operatorname{sgn}(T)} \delta_{\mathsf{A}}\left(x_0-x_1+x_2\right) \delta_{\mathsf{A}}\left(x_2-x_3+z(T)\right), \quad x_i:=x\left(\partial_i T\right).
$$

Also we associate to $X$ the following \textit{dynamical content}
$$
D_{X, \theta}(z) \in \mathcal{S}^{\prime}\left(\mathsf{A}^{\Delta_3(X)} \times \mathsf{C}^{\Delta_3^1(X)}\right), \quad D_{X, \theta}(z)=\prod_{T \in \Delta_3(X)} D_{T, \theta}(z(T))
$$
where
$$
D_{T, \theta}= \begin{cases}\Psi_{a, c} & \text { if } \operatorname{sgn}(T)=+1, \\ \bar{\Psi}_{a, c} & \text { if } \operatorname{sgn}(T)=-1.\end{cases}
$$

\begin{theorem}(\cite{garoufalidis2023quantum}).
For an oriented triangulated pseudo 3-manifold $X$ satisfying $\mathrm{H}_2(X, \mathbb{Z})=0$, the partition function
$$
Z_{F}(X):=\int_{z \in \mathsf{A}^{\Delta_3(X)}} K_X D_{X, \theta} \mathrm{d} z
$$
is invariant under shaped 2–3 Pachner moves. $Z_{F}(X)$ is the partition function of face-type Teichmüller TQFT over local field $F$.
\end{theorem}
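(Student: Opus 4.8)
The plan is to exploit the locality of the Pachner move and reduce the global invariance statement to the single algebraic identity — the charged pentagon relation — that the angled quantum dilogarithms $\Psi_{a,c}$ already satisfy. A $2$–$3$ move is supported on a bipyramid: it replaces two tetrahedra glued along a common triangular face by three tetrahedra sharing a newly created interior edge (or the reverse). Outside this bipyramid the triangulation, its shape assignment $\theta$, and hence every factor of $K_X$ and $D_{X,\theta}$ are untouched. Thus I would first argue that $Z_F(X)$ factors as an integral over the "external" state variables of a local contribution coming from the bipyramid, and that it therefore suffices to prove this local contribution is unchanged by the move. Here the hypothesis $\mathrm{H}_2(X,\mathbb{Z})=0$ enters to guarantee that the defining integrals converge as tempered distributions and that the factorization, together with the Fubini-type interchange used below, is legitimate in $\mathcal{S}'$.

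Second, I would separate the kinematical and dynamical data of the local contribution. The dynamical content supplies a product $\Psi_{a_1,c_1}(\alpha,x)\,\Psi_{a_3,c_3}(\beta,y)$ on the two-tetrahedron side and a product $\Psi_{a_0,c_0}\Psi_{a_2,c_2}\Psi_{a_4,c_4}$ on the three-tetrahedron side, while the kinematical kernel $K_X$ supplies, via its factors $\langle x_0;z(T)\rangle^{\operatorname{sgn}(T)}$ and the delta functions $\delta_{\mathsf{A}}(x_0-x_1+x_2)$, $\delta_{\mathsf{A}}(x_2-x_3+z(T))$, a system of linear relations among the face and tetrahedron variables. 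I would integrate out the face variables that become interior to the bipyramid, using the delta functions to eliminate them; the new interior edge of the move corresponds precisely to the integration variable $(\gamma,z)$ over $\hat{\mathsf{B}}\times\mathsf{B}$ appearing on the right-hand side of the charged pentagon relation. The crucial check is that, after this elimination, the residual Fourier-kernel factors emitted by $K_X$ assemble exactly into the prefactor $\alpha(y/z)\beta(x/z)/\gamma(xy/z^2)$ of the charged pentagon.

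Third, I would verify that the angle-matching conditions of the move coincide with the constraints $a_3=a_2+a_4$, $c_3=a_0+c_4$, $c_1=c_0+a_4$, $a_1=a_0+a_2$, $c_2=c_1+c_3$ required by the relation. This is where the shape structure condition — that the angles around each vertex of each tetrahedron sum to $\varpi$ — is used: it forces the angles of the three new tetrahedra to be determined compatibly by those of the two old ones, and these compatibility equations are exactly the linear relations above. With the kinematical and angular data matched, the charged pentagon relation then identifies the two local contributions, and invariance under the $2$–$3$ move follows.

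Finally, I would handle the several combinatorially distinct versions of the move arising from the possible vertex orderings, equivalently from the distribution of the signs $\operatorname{sgn}(T)=\pm1$ among the tetrahedra. For configurations involving negatively oriented tetrahedra the relevant factors are $\bar{\Psi}_{a,c}$ rather than $\Psi_{a,c}$, and I would reduce each such case to the charged pentagon by inserting the symmetry relations $\Psi_{a,c}(-\alpha,1/x)\alpha(x)=\bar{\Psi}_{a,b}(\alpha,x)$ and $\int_{\hat{\mathsf{B}}\times\mathsf{B}}\Psi_{a,c}(\beta,y)\alpha(x/y)\beta(y/x)\,\mathrm{d}(\beta,y)=\bar{\Psi}_{b,c}(\alpha,x)$, which convert between the two building blocks and absorb the sign-dependent Fourier kernels. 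I expect the main obstacle to lie not in the algebra — the charged pentagon relation is given — but in the bookkeeping of the second step: correctly orienting the bipyramid, tracking which face variables are eliminated, and confirming that the Fourier kernels from $K_X$ reconstitute the pentagon prefactor with exactly the right arguments and signs. Keeping the vertex-ordering conventions and the induced $\operatorname{sgn}(T)$ assignments consistent across all cases is the delicate part.
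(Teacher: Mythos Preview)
The paper does not give its own proof of this theorem: it is quoted verbatim as a result of Garoufalidis--Kashaev (the ``\texttt{\textbackslash cite\{garoufalidis2023quantum\}}'' attached to the theorem statement), and no argument appears in the present paper beyond the remark preceding the definition of $\Psi_{a,c}$ that ``the relationship it satisfies will be used to show that the partition function of a triangulation is invariant under 2--3 Pachner moves.'' So there is nothing in the paper to compare your proposal against line by line.

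That said, your outline is exactly the mechanism the paper's exposition sets up and alludes to: locality of the Pachner move reduces the question to a bipyramid, the kinematical delta functions eliminate the interior face variables, the residual Fourier kernels from $K_X$ produce the prefactor $\alpha(y/z)\beta(x/z)/\gamma(xy/z^2)$, the shaped-move constraints reproduce the linear relations $a_3=a_2+a_4$, $c_3=a_0+c_4$, $c_1=c_0+a_4$, $a_1=a_0+a_2$, $c_2=c_1+c_3$, and the charged pentagon relation then equates the two sides. The use of the symmetry relations to handle negatively oriented tetrahedra is likewise the standard reduction. One small caution: your invocation of $\mathrm{H}_2(X,\mathbb{Z})=0$ as a convergence/Fubini condition is vague --- in this circle of constructions that hypothesis is typically used to guarantee that the face state variables can be globally and consistently parametrized (so that the kinematical kernel is well defined as a distribution), rather than to justify an interchange of integrals. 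If you want to turn the sketch into a proof you should pin down precisely where $\mathrm{H}_2=0$ enters; otherwise your plan matches the intended argument.
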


\subsection{Two edge-type Teichmüller TQFT over local field}

Now we construct partition functions of a pair of edge-type generalized Teichmüller TQFTs over local field $F$ using the Weil transform of quantum dilogarithm, one with respect to $\hat{\mathsf{B}}$ and another with respect to $\mathsf{B}$. These Weil transforms lead to an edge-type state integral whose states are placed on the edges of the tetrahedra. The weights of the tetrahedra depend only on the combinatorial information of the triangulation, as encoded by the Neumann-Zagier matrices\cite{Neumann1985VolumesOH}.

Firstly we consider the $\hat{\mathsf{B}}$-Weil transform\cite{Garoufalidis2017AME}. Define the function
$$g_{a, c}(x, z)=g_{a, c}((\alpha, x),(\gamma, z)):=\alpha(z) \int_{\hat{\mathsf{B}}} \bar{\Psi}_{a, c}(-\alpha-\beta, 1 / x)\langle(\beta, 1) ;(\gamma, z)\rangle \mathrm{d} \beta.$$
Using (\ref{general charged qd}) we have
\begin{equation}
g_{a, c}(x, z)=f_{\dot{a}, \dot{c}}(1 /(x \ddot{a}), z \ddot{c}), \quad f_{\dot{a}, \dot{c}}(x, z):=\|x\|^{\dot{c}}\|z\|^{\dot{a}} \delta_F(x+z-1).    
\end{equation}
Define a $\mathsf{B}$-Boltzmann weight
$$B^{(\mathsf{B})}(T,x):=g_{a,c}(\frac{x_{0,2} x_{1,3}}{x_{0,3} x_{1,2}},\frac{x_{0,1} x_{2,3}}{x_{0,2} x_{1,3}}),$$
if $T$ is positive and conjugate otherwise. Here $x_{i, j}$ is the $\mathsf{B}$-valued state variable on the geometric edge opposite the edge $\partial_i \partial_j T$ of the tetrahedron $T$. Then the partition function of $\mathsf{B}$-type Teichmüller TQFT over local field $F$ is defined by

$$
Z_{F}^{(\mathsf{B})}(X):=\int_{\mathsf{B}^{\Delta_1(X)}}\left(\prod_{T \in \Delta_3(X)} B^{\mathsf{B})}\left(T,\left.x\right|_{\Delta_1(T)}\right)\right) d x
$$

Similarly one can define the partition function of $\hat{\mathsf{B}}$-type Teichmüller TQFT over local field $F$ based on $\hat{\mathsf{B}}$-Boltzmann weight
$$B^{(\hat{\mathsf{B}})}(T,\alpha):=h_{a,c}(\alpha_{0,2}+\alpha_{1,3}-\alpha_{0,3}-\alpha_{1,2},\alpha_{0,1}+\alpha_{2,3}-\alpha_{0,2}-\alpha_{1,3}),$$
where
$$
h_{a, c}(\alpha, \gamma)=h_{a, c}((\alpha, x),(\gamma, z)):=\gamma(x) \int_{\mathsf{B}} \bar{\Psi}_{a, c}(-\alpha, 1 /(x y))\langle(0, y) ;(\gamma, z)\rangle \mathrm{d} y .
$$

\section{Examples of $\mathsf{B}$-type}

\begin{example}
Consider the ideal triangulation of the complement of the $\mathbf{3}_1$ in $S^3$ with two positive tetrahedrons $T_1$ and $T_2$ with the edge identification
\begin{equation}
\begin{array}{c|cccccc|}
\hbox{\diagbox[width=2.5em]{\tiny{tet}}{\tiny{edge}}} &  01 & 02 & 03 & 12 & 13 & 23 \\
\hline
    1  &            s &  s &  t &  s &  s &  s \\
    2  &            s &  s &  t &  s &  s &  s \\
  \end{array}
  \end{equation}
Then we have
$$
x_1=\frac{s^2}{st}=\frac{s}{t}:=x,\quad z_1=\frac{s^2}{s^2}=1,\quad x_2=\frac{s}{t}=\frac{s}{t},\quad z_2=1.
$$
The balance condition on the edge is
$$
c_{1}+c_{2}=(2,1).
$$    
 The angle holonomy of half of the longitude is given by
 $$\lambda=2a_1-2a_2-c_1$$
and the angle holonomy of the meridian is given by
$$\mu=a_2-a_1.$$
Then the partition function of $\mathsf{B}$-type Teichmüller TQFT over local field $F$ associate to $S^3\backslash \mathbf{3}_1$ is given by
$$
\begin{aligned}
Z_{F}^{(\mathsf{B})}(S^3\backslash \mathbf{3}_1)=&\int_{\mathsf{B}} g_{a_1, c_1}\left(x, 1 \right) g_{a_2, c_2}\left(x, 1\right) \mathrm{d} x\\ 
=&\int_{\mathsf{B}} f_{\dot{a}_1, \dot{c}_1}\left(\frac{1}{x \ddot{a}_1}, \ddc_1\right) f_{\dot{a}_2, \dot{c}_2}\left(\frac{1}{x \ddot{a}_2}, \ddc_2\right) \mathrm{d} x\\
=&\int_{\mathsf{B}} f_{\dot{a}_1, \dot{c}_1}\left(\frac{1}{x}, \ddc_1\right) f_{\dot{a}_2, \dot{c}_2}\left(\frac{\ddot{a}_1}{x \ddot{a}_2}, \ddc_2\right) \mathrm{d} x\\
=&\int_{\mathsf{B}} f_{\dot{a}_1, \dot{c}_1}\left(\frac{1}{x},\frac{1}{\ddm^2\ddot{\lambda}}\right) f_{\dot{a}_2, \dot{c}_2}\left(\frac{1}{x\ddot{\mu}},\ddm^2\ddl\right) \mathrm{d} x \\
=&\int_{\mathsf{F}} \frac{\|\ddot{\lambda}\|^{-\da_1+\da_2}}{\|\ddot{\mu}\|^{2\da_1-2\da_2-\dc_1}\|x\|^{\dc_1+\dc_2}} \delta_{\mathsf{F}}\left(\frac{1}{x}+\frac{1}{\ddm^2\ddot{\lambda}}-1\right) \delta_{\mathsf{F}}\left(\frac{1}{x\ddot{\mu}}+\ddm^2\ddl-1\right) \frac{\mathrm{d} x}{\|x\|} \\
=&\int_\mathsf{F}\frac{\|\ddot{\lambda}\|^{\dot{\mu}}}{\|\ddot{\mu}\|^{\dot{\lambda}}\|x\|} \delta_{\mathsf{F}}\left(1+\frac{x}{\ddm^2\ddl}-x\right) \delta_{\mathsf{F}}\left(\frac{1}{\ddm}+\ddm^2\ddl x-x\right) \mathrm{d} x \\
=& \frac{\|\ddot{\lambda}\| ^{\dot{\mu}}\|\ddm\|\|1-\ddm^2\ddl\|}{\|\ddot{\mu}\|^{\dot{\lambda}}} \delta_{\mathsf{F}}\left(1-\ddm^2\ddl+\frac{1}{\ddm^3\ddl}-\frac{1}{\ddm}\right) \\
=& \frac{\|\ddot{\lambda}\| ^{\dot{\mu}}\|\ddm\|}{\|\ddot{\mu}\|^{\dot{\lambda}}} \delta_{\mathsf{F}}\left(1+\frac{1}{\ddm^3\ddl}\right) \\
=&\frac{\|\ddot{\lambda}\|^{\dot{\mu}}}{\|\ddot{\mu}\|^{\dot{\lambda}}}\|\ddot{\lambda}\|\|\ddot{\mu}\|^4\ \delta_{\mathsf{F}}\left(A_{\mathbf{3}_1}(\ddot{\lambda}, \ddot{\mu}^{\frac{1}{2}})\right) ,\\
\end{aligned}
$$    
where $$A_{\mathbf{3}_1}(l, m)=m^6 l+1.$$
\end{example}

\begin{example}
Consider the ideal triangulation of the complement of the $\mathbf{4}_1$ in $S^3$ with one positive tetrahedron $T_1$ and one negative tetrahedron $T_2$ with the edge identification
\begin{equation}
\begin{array}{c|cccccc|}
\hbox{\diagbox[width=2.5em]{\tiny{tet}}{\tiny{edge}}} &  01 & 02 & 03 & 12 & 13 & 23 \\
\hline
    1  &            s &  t &  s &  t &  t &  s \\
    2  &            t &  s &  t &  s &  s &  t \\
  \end{array}
  \end{equation}
Then we have
$$
x_1=\frac{t^2}{st}=\frac{t}{s}:=x,\quad z_1=\frac{s^2}{t^2},\quad x_2=\frac{s^2}{st}=\frac{s}{t},\quad z_2=\frac{t^2}{s^2}.
$$
The balance condition on the edge is
$$
2 a_{1}+c_{1}+2 b_{2}+c_{2}=(2,1).
$$    
 The angle holonomy of half of the longitude is given by
 $$\lambda=2a_1+c_1+\varpi$$
and the angle holonomy of the meridian is given by
$$\mu=a_1-a_2.$$
Then the partition function of $\mathsf{B}$-type Teichmüller TQFT over local field $F$ associate to $S^3\backslash \mathbf{4}_1$ is given by
$$
\begin{aligned}
Z_{F}^{(\mathsf{B})}(S^3\backslash \mathbf{4}_1)=&\int_{\mathsf{B}} g_{a_1, c_1}\left(x, 1 / x^2\right) \bar{g}_{a_2, c_2}\left(1 / x, x^2\right) \mathrm{d} x\\ 
=&\int_{\mathsf{B}} f_{\dot{a}_1, \dot{c}_1}\left(\frac{1}{x \ddot{a}_1}, \frac{\ddot{c}_1}{x^2}\right) f_{\dot{a}_2, \dot{c}_2}\left(x \ddot{a}_2, \frac{x^2}{\ddot{c}_2}\right) \mathrm{d} x\\
=&\int_{\mathsf{B}} f_{\dot{a}_1, \dot{c}_1}\left(\frac{1}{x}, \frac{\ddot{c}_1 \ddot{a}_1^2}{x^2}\right) f_{\dot{a}_2, \dot{c}_2}\left(\frac{x \ddot{a}_2}{\ddot{a}_1}, \frac{x^2}{\ddot{c}_2 \ddot{a}_1^2}\right) \mathrm{d} x\\
=&\int_{\mathsf{B}} f_{\dot{a}_1, \dot{c}_1}\left(\frac{1}{x},-\frac{\ddot{\lambda}}{x^2}\right) f_{\dot{a}_2, \dot{c}_2}\left(\frac{x}{\ddot{\mu}},-\frac{x^2}{\ddot{\lambda}\ddot{\mu}^2}\right) \mathrm{d} x \\
=&\int_{\mathsf{F}} \frac{\|\ddot{\lambda}\|^{\dot{a}_1-\dot{a}_2}\|x\|^{2 \dot{a}_2+\dot{c}_2}}{\|\ddot{\mu}\|^{2 \dot{a}_2+\dot{c}_2}\|x\|^{2 \dot{a}_1+\dot{c}_1}} \delta_{\mathsf{F}}\left(\frac{1}{x}-\frac{\ddot{\lambda}}{x^2}-1\right) \delta_{\mathsf{F}}\left(\frac{x}{\ddot{\mu}}-\frac{x^2}{\ddot{\lambda}\ddot{\mu}^2}-1\right) \frac{\mathrm{d} x}{\|x\|} \\
=&\int_\mathsf{F}\frac{\|\ddot{\lambda}\|^{\dot{\mu}}\|x\|}{\|\ddot{\mu}\|^{\dot{\lambda}-1}} \delta_{\mathsf{F}}\left(x-\ddot{\lambda}-x^2\right) \delta_{\mathsf{F}}\left(\frac{x}{\ddot{\mu}}-\frac{x^2}{\ddot{\lambda}\ddot{\mu}^2}-1\right) \mathrm{d} x \\
=&\int_\mathsf{F} \frac{\|\ddot{\lambda}\| ^{\dot{\mu}}\|\ddot{\mu}\|^2\|x\|}{\|\ddot{\mu}\|^{\dot{\lambda}}} \delta_{\mathsf{F}}\left(x-\ddot{\lambda}-x^2\right) \delta_{\mathsf{F}}\left(x-\frac{x-\ddot{\lambda}}{\ddot{\lambda} \ddot{\mu}}-\ddot{\mu}\right) \mathrm{d} x \\
=&\frac{\|\ddot{\lambda}\|^{\dot{\mu}}\|\ddot{\mu}\|^2\|\ddot{\mu}-1 / \ddot{\mu}\|}{\|\ddot{\mu}\|^{\dot{\lambda}}} \delta_{\mathsf{F}}\left((\ddot{\mu}-\ddot{\lambda})\left(1-\frac{1}{\ddot{\lambda} \ddot{\mu}}\right)-(\ddot{\mu}-1 / \ddot{\mu})^2\right)\\
=&\frac{\|\ddot{\lambda}\|^{\dot{\mu}}}{\|\ddot{\mu}\|^{\dot{\lambda}}}\|\ddot{\lambda}\|\|\ddot{\mu}\|^4\|\ddot{\mu}-1 / \ddot{\mu}\| \delta_{\mathsf{F}}\left(A_{\mathbf{4}_1}(-\ddot{\lambda}, \ddot{\mu}^{\frac{1}{2}})\right) ,\\
\end{aligned}
$$
where
$$A_{\mathbf{4}_1}(l, m)=  m^4 l^2-\left(m^8-m^6-2 m^4-m^2+1\right) l+m^4.$$
\end{example}

\begin{example}
Consider the ideal triangulation of the complement of the $\mathbf{5}_2$ in $S^3$ with three positive tetrahedron $T_j$ for $j=1,2,3$ with the edge identification given by
\begin{equation}
\begin{array}{c|cccccc|}
\hbox{\diagbox[width=2.5em]{\tiny{tet}}{\tiny{edge}}} &  01 & 02 & 03 & 12 & 13 & 23 \\
\hline
    1  &            s &  t &  t &  s &  u &  u \\
    2  &            u &  s &  t &  t &  t &  u \\
    3  &            s &  u &  t &  u &  s &  t
    \\
  \end{array}
  \end{equation}
Then we have
$$
\begin{aligned}
x_1=\frac{tu}{st}=\frac{u}{s}:=x,&\quad z_1=\frac{su}{tu}=\frac{s}{t}:=z,\\
x_2=\frac{st}{t^2}=\frac{s}{t}=z,&\quad z_2=\frac{u^2}{st}=x^2z,\\
x_3=\frac{su}{tu}=\frac{s}{t}=z,&\quad z_3=\frac{st}{su}=\frac{t}{u}=\frac{1}{xz}.\\    
\end{aligned}
$$
The balance condition on the two edges is
$$
\begin{aligned}
b_1+c_1+b_2+2 c_2+a_3+c_3&=(2,1),\\ a_1+b_1+2 a_2+b_3+c_3&=(2,1)\\    
\end{aligned}
$$    
 The angle holonomy of half of the longitude is given by
 $$\lambda=2 a_1+4 a_2-3 a_3-c_2 ,$$
and the angle holonomy of the meridian is given by
$$\mu=a_3-a_2.$$
Then the partition function of $\mathsf{B}$-type Teichmüller TQFT over local field $F$ associate to $S^3\backslash \mathbf{5}_2$ is given by
$$
\begin{aligned}
Z_{F}^{(\mathsf{B})}(S^3\backslash \mathbf{5}_2)=&\int_{\mathsf{B}^2} g_{a_1, c_1}\left(x, z\right) g_{a_2, c_2}\left(z, x^2z\right)g_{a_3, c_3}\left(z, \frac{1}{xz}\right) \mathrm{d} x\mathrm{d} z\\ 
=&\int_{\mathsf{B}^2} f_{\dot{a}_1, \dot{c}_1}\left(\frac{1}{x \ddot{a}_1}, z\ddc_1\right) f_{\dot{a}_2, \dot{c}_2}\left(\frac{1}{z\dda_2},x^2z\ddc_2\right)f_{\dot{a}_3, \dot{c}_3}\left(\frac{1}{z\dda_3},\frac{\ddc_3}{xz}\right) \mathrm{d} x\mathrm{d} z\\
=&\int_{\mathsf{B}^2} f_{\dot{a}_1, \dot{c}_1}\left(\frac{1}{x}, \frac{z\ddc_1}{\dda_2}\right) f_{\dot{a}_2, \dot{c}_2}\left(\frac{1}{z},\frac{x^2z\ddc_2}{\dda_1^2\dda_2}\right)f_{\dot{a}_3, \dot{c}_3}\left(\frac{\dda_2}{z\dda_3},\frac{\dda_1\dda_2\ddc_3}{xz}\right) \mathrm{d} x\mathrm{d} z\\
=&\int_{\mathsf{B}^2} f_{\dot{a}_1, \dot{c}_1}\left(\frac{1}{x}, \frac{z}{\ddm}\right) f_{\dot{a}_2, \dot{c}_2}\left(\frac{1}{z},\frac{x^2z}{\ddl\ddm^3}\right)f_{\dot{a}_3, \dot{c}_3}\left(\frac{1}{z\ddm},\frac{\ddl\ddm^2}{xz}\right) \mathrm{d} x\mathrm{d} z\\
=&\int_{\mathsf{F}^2} \|\ddl\|^{-\da_2+\da_3}\|\ddm\|^{-\da_1-3\da_2-\dc_3+2\da_3}\|x\|^{2\da_2-\da_3-\dc_1}\|z\|^{\da_1+\da_2-\da_3-\dc_2-\dc_3}\\
&\times\delta_{\mathsf{F}}\left(\frac{1}{x}+\frac{z}{\ddm}-1\right) \delta_{\mathsf{F}}\left(\frac{1}{z}+\frac{x^2z}{\ddl\ddm^3}-1\right) \delta_{\mathsf{F}}\left(\frac{1}{z\ddm}+\frac{\ddl\ddm^2}{xz}-1\right) \frac{\mathrm{d} x}{\|x\|} \frac{\mathrm{d} z}{\|z\|}\\
=&\int_{\mathsf{F}^2}\frac{\|\ddot{\lambda}\|^{\dot{\mu}}}{\|\ddot{\mu}\|^{\dot{\lambda}}} \delta_{\mathsf{F}}\left(1+\frac{xz}{\ddm}-x\right) \delta_{\mathsf{F}}\left(\frac{1}{z}+\frac{x^2z}{\ddl\ddm^3}-1\right) \delta_{\mathsf{F}}\left(\frac{1}{\ddm}+\frac{\ddl\ddm^2}{x}-z\right)  \mathrm{d} x  \mathrm{d} z\\
=&\int_{\mathsf{F}}\frac{\|\ddot{\lambda}\|^{\dot{\mu}}}{\|\ddot{\mu}\|^{\dot{\lambda}}} \delta_{\mathsf{F}}\left(1+\frac{x(\frac{1}{\ddm}+\frac{\ddl\ddm^2}{x})}{\ddm}-x\right) \delta_{\mathsf{F}}\left(\frac{1}{\frac{1}{\ddm}+\frac{\ddl\ddm^2}{x}}+\frac{x^2(\frac{1}{\ddm}+\frac{\ddl\ddm^2}{x})}{\ddl\ddm^3}-1\right)  \mathrm{d} x\\
=&\int_{\mathsf{F}}\frac{\|\ddot{\lambda}\|^{\dot{\mu}}}{\|\ddot{\mu}\|^{\dot{\lambda}}\|x\|} \delta_{\mathsf{F}}\left((\frac{1}{\ddm^2}-1)x+\ddl\ddm+1\right) \delta_{\mathsf{F}}\left(\frac{\ddm}{x+\ddl\ddm^3}+\frac{x}{\ddl\ddm^4}+\frac{1}{\ddm}-\frac{1}{x}\right)  \mathrm{d} x\\
=&\frac{\|\ddot{\lambda}\|^{\dot{\mu}}}{\|\ddot{\mu}\|^{\dot{\lambda}}}\frac{1}{\|\frac{1}{\ddm^2}-1\|\|\frac{\ddl\ddm+1}{1-\frac{1}{\ddm^2}}\|}\delta_{\mathsf{F}}\left(\frac{\ddm}{(\frac{\ddl\ddm+1}{1-\frac{1}{\ddm^2}})+\ddl\ddm^3}+\frac{\frac{\ddl\ddm+1}{1-\frac{1}{\ddm^2}}}{\ddl\ddm^4}+\frac{1}{\ddm}-\frac{1}{(\frac{\ddl\ddm+1}{1-\frac{1}{\ddm^2}})}\right)\\
=&\frac{\|\ddot{\lambda}\|^{\dot{\mu}}}{\|\ddot{\mu}\|^{\dot{\lambda}}}\|\ddot{\lambda}\|\|\ddot{\mu}\|^4\|\ddot{\mu}-1 / \ddot{\mu}\| \|1+\ddl\ddm^3\|\delta_{\mathsf{F}}\left(A_{\mathbf{5}_2}(\ddot{\lambda}, \ddot{\mu}^{\frac{1}{2}})\right), \\
\end{aligned}
$$   
where 
$$\begin{aligned}
A_{5_2}(l, m)= & l^3+\left(m^{10}-m^8+2 m^4+2 m^2-1\right) l^2 \\
& -m^4\left(m^{10}-2 m^8-2 m^6+m^2-1\right) l+m^{14} .
\end{aligned}$$
\end{example}

\section{A theorem connecting A-polynomial and state-integral}\label{section 3}

After observing the above examples, we find that for knot complements the form of the partition function of $\mathsf{B}$-type Teichmüller TQFT should be 
$$
Z_F^{(\mathsf{B})}(S^3\backslash K)=\frac{\|\ddot{\lambda}\|^{\dot{\mu}}}{\|\ddot{\mu}\|^{\dot{\lambda}}}\|\mathcal{F}_K(\ddl,\ddm)\|\delta_{\mathsf{F}}\left(A_{K}(\ddot{\lambda}, \ddot{\mu}^{\frac{1}{2}})\right),
$$
for proper choice of angle holonomies of half of longitude and meridian. 

Let $M$ be an oriented one-cusped 3-manifold equipped with an oriented ideal triangulation $\mathcal{T}=\left\{\Delta_j\right\}_{j=1}^N$. Recall that to each tetrahedron $\Delta_j$ we can associate three variables $z_j, z_j^{\prime}, z_j^{\prime \prime}$, satisfying the following equations
\begin{equation}\label{hyperbolic eq 1}
z_j z_j^{\prime} z_j^{\prime \prime}=-1    
\end{equation}
and
\begin{equation}\label{hyperbolic eq 2}
z_j+\left(z_j^{\prime}\right)^{-1}-1=0.    
\end{equation}

Recall the gluing equation
\begin{equation}\label{gluing equation z}
\prod_{j=1}^n z_j^{a{i j}}\left(z_j^{\prime}\right)^{b}_{i j}\left(z_j^{\prime \prime}\right)^{c_{i j}}=1,\quad i=1,\dots,N, 
\end{equation}
and the cusp equation
\begin{equation}\label{cusp equation}
    m=\prod_{j=1}^n z_j^{a_{ j}^{m}}(z')_j^{b_{ j}^{m}}(z'')_j^{c_{ j}^{m}}, \quad l=\prod_{j=1}^n z_j^{a_{ j}^{l}}(z'_j)^{b_{ j}^{l}}(z''_j)^{c_{ j}^{l}},
\end{equation}
where $l$ and $m$ correspond to topological longitude and meridian. 
We orient $m$ and $l$ that the tangent vectors $v_{m}$ and $v_{l}$ to $m$ and $l$, respectively, at the point where $m$ intersects $l$ are oriented according to the right-hand rule.

The equations (\ref{hyperbolic eq 1}) and (\ref{hyperbolic eq 2}) relating $z, z^{\prime}, z^{\prime \prime}$ variables, the gluing equations (\ref{gluing equation z}), and the cusp equations (\ref{cusp equation}) are equations in the variables $z_j, z_j^{\prime}, z_j^{\prime \prime}$ and $l, m$. Solving these equations for $l, m$, eliminating the variables $z_j, z_j^{\prime}, z_j^{\prime \prime}$, we obtain a relation between $l$ and $m$.

\begin{theorem}\label{champanerkar theorem}
(\cite{champanerkar2003polynomial}). 
When we solve the system of equations (\ref{hyperbolic eq 1}), (\ref{hyperbolic eq 2}), (\ref{gluing equation z}) and (\ref{cusp equation}) in terms of $m$ and $\ell$, we obtain a factor of the  $\operatorname{PSL}(2, \mathbb{C})$ A-polynomial.
\end{theorem}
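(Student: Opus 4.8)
The plan is to identify the system (\ref{hyperbolic eq 1})--(\ref{cusp equation}), solved for $m$ and $\ell$, with a parametrization of (part of) the $\operatorname{PSL}(2,\mathbb{C})$ character variety of $M=S^3\setminus K$, and then to read off the A-polynomial from its definition. First I would recall that definition: write $X(M)$ for the character variety of $\pi_1 M$, $r\colon X(M)\to X(\partial M)$ for restriction to the cusp torus, and take the diagonal slice of $X(\partial M)$ with coordinates $(l,m)$ given by the eigenvalues of the longitude and meridian holonomies; then $A_K$ is the defining polynomial of the one-dimensional part of the Zariski closure of the lift of $r(X(M))$ to that slice, the $\operatorname{PSL}_2$ variant being the obvious analogue. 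It is this $\operatorname{PSL}_2$ normalization that makes the meridian enter through its square, consistent with the substitution $m\mapsto\ddm^{1/2}$, $l\mapsto\ddl$ (up to signs) appearing in the worked examples.

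The geometric input is due to Thurston and Neumann--Zagier. A solution $z=(z_1,\dots,z_N)$ of the gluing equations (\ref{gluing equation z}) together with (\ref{hyperbolic eq 1})--(\ref{hyperbolic eq 2}) is precisely the data needed to develop the ideally triangulated $M$ into $\mathbb{H}^3$: the $N-1$ independent gluing equations are exactly the consistency conditions that make the face-pairing matrices, each assembled from the $z_j$, fit together around every edge, so they produce a holonomy representation $\rho_z\colon\pi_1 M\to\operatorname{PSL}(2,\mathbb{C})$ well defined up to conjugation. Restricting the developing map to a horospherical torus, the holonomy of a normal loop on the induced cusp triangulation is a product of corner matrices, each parabolic up to a diagonal factor $z_j^{\pm1}$, $(z_j')^{\pm1}$ or $(z_j'')^{\pm1}$ determined by the edge of $\Delta_j$ the loop turns around; collecting these diagonal parts along the meridian and longitude, oriented by the right-hand-rule convention of the text, reproduces exactly the monomials on the right-hand sides of the cusp equations (\ref{cusp equation}). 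Hence $\rho_z(\mu)$ and $\rho_z(\lambda)$ have eigenvalues $m(z)$ and $l(z)$ as defined there.

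Putting these together, the map $(z,m,l)\mapsto(l,m)$ on the affine variety cut out by (\ref{hyperbolic eq 1})--(\ref{cusp equation}) factors through the restriction image $r(X(M))$ lifted to the diagonal slice, so the Zariski closure of its image lies in $\{A_K=0\}$; since ``solving the system for $m$ and $\ell$'' means eliminating the $z_j,z_j',z_j''$, the resulting polynomial has exactly that closure as its (pure one-dimensional) zero locus, whence every irreducible component of it is a component of the A-polynomial curve and the eliminant is a factor of $A_K$. That the eliminant is nontrivial follows because the complete structure $z^{\mathrm{geo}}$ is a smooth point of a component of the deformation variety of dimension equal to the number of cusps ($=1$), and by Thurston's hyperbolic Dehn surgery theorem $(l,m)$ is non-constant near it, so that component maps onto the geometric component of $A_K$. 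The crux of the argument, and the step demanding the most care, is the cusp-holonomy computation: one must fix compatible orientations of $\mu$ and $\lambda$, track the eigenvalue-versus-square ambiguity forced by working in $\operatorname{PSL}_2$, and verify that the diagonal parts extracted from the corner matrices are \emph{literally} the monomials in (\ref{cusp equation}) --- this is what guarantees that the eliminant is a factor of the actual A-polynomial rather than of some birationally related curve. A secondary point, which is precisely why the statement says ``a factor of'' rather than ``equal to'', is that $A_K$ in its standard normalization can carry components invisible to the triangulation, such as the abelian factor $l-1$.
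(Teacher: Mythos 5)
The paper does not actually prove this statement: it is imported verbatim from Champanerkar's work \cite{champanerkar2003polynomial} and used as a black box in the proof of the main theorem of Section \ref{section 3}, so there is no internal proof to compare against. Your sketch reconstructs essentially the argument of the cited source: a solution of (\ref{hyperbolic eq 1})--(\ref{gluing equation z}) with shapes in $\mathbb{C}\setminus\{0,1\}$ yields, via Thurston's developing-map construction, a holonomy representation $\pi_1(M)\to\operatorname{PSL}(2,\mathbb{C})$; the cusp monomials in (\ref{cusp equation}) are the derivatives of the peripheral holonomies acting on the cusp cross-section, i.e.\ the (squared, sign-ambiguous) boundary eigenvalues, which is exactly the normalization that makes the $\operatorname{PSL}_2$ A-polynomial and the substitution $m\mapsto\ddot{\mu}^{1/2}$ in the examples appear; eliminating the shape variables then lands the image inside the eigenvalue variety, so the eliminant divides the $\operatorname{PSL}(2,\mathbb{C})$ A-polynomial, with the abelian component $l-1$ and other triangulation-invisible components explaining ``a factor of''. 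Two small points deserve explicit mention if you flesh this out: first, the image of the restriction of a component of the character variety to the boundary torus of a one-cusped manifold has dimension at most one (this is what guarantees you land in the curve part of the eigenvalue variety rather than something larger); second, a component of the gluing variety can have zero-dimensional boundary-eigenvalue image, and such components must be discarded (as Champanerkar does by taking only one-dimensional components of the closure of the image) --- your nontriviality argument via the geometric solution and Thurston's Dehn surgery theorem only covers the geometric component and only when the triangulation is geometric, whereas the statement as quoted concerns arbitrary solutions of the system. With those caveats your outline is correct and is the same route as the cited proof.
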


Note that the equation $\frac{1}{x\dda}+z\ddc-1=0$ is equivalent to 
$$
\ddb^{-1}x_{0,1} x_{2,3}+\dda x_{0,2} x_{1,3}-x_{0,3} x_{1,2}=0
$$
when $x=\frac{x_{0,2} x_{1,3}}{x_{0,3} x_{1,2}}$ and $z=\frac{x_{0,1} x_{2,3}}{x_{0,2} x_{1,3}}$.
We now prove the following theorem, which explains why the A-polynomial appears in the delta function.

\begin{theorem}
Fix an ideal triangulation $\mathcal{T}$ of an oriented one-cusped 3-manifold $X$ with $N$ tetrahedra $T_i$ for $i=1, \ldots, N$. Choose a $\mathsf{C}$-valued shape structure $\theta$ on $X$. Assign variables $x_k$ for $k=1, \ldots, N$ to $N$ edges of $\mathcal{T}$. Consider the following $N$ equations
\begin{equation}\label{old P equation}
E_{T_j}:\begin{cases}\ddb_j^{-1}X_{\alpha}^{(j)}+\dda_jX_{\beta}^{(j)}-X_{\gamma}^{(j)}=0& \text { if } \operatorname{sgn}(T_j)=+1, \\\ddb_jX_{\alpha}^{(j)}+\dda_j^{-1}X_{\beta}^{(j)}-X_{\gamma}^{(j)}=0& \text { if } \operatorname{sgn}(T_j)=-1,\end{cases}\quad j=1,\dots,N,
\end{equation}
where $X_{\alpha}^{(j)}=x_{0,1}^{(j)}x_{2,3}^{(j)}$, $X_{\beta}^{(j)}=x_{0,2}^{(j)}x_{1,3}^{(j)}$, $X_{\alpha}^{(j)}=x_{0,3}^{(j)}x_{1,2}^{(j)}$ 
and $x_{m,l}^{(j)}$ is the variable on the edge opposite to the edge $\partial_m \partial_l T_j$. 

With the above assumptions, there exists a column vector $\eta$ whose coordinates depend only on $\ddot{\theta}$, such that after change of variable $x_k=\eta_k y_k$ for $k=1,\dots,N$, the equations (\ref{old P equation}) become
\begin{equation}\label{new P equation}
E'_{T_j}:\begin{cases}\dda_j+\ddb_j^{-1}=0& \text { if } \operatorname{sgn}(T_j)=+1, \\\ddb_j+\dda_j^{-1}-1=0& \text { if } \operatorname{sgn}(T_j)=-1,\end{cases}\quad j=1,\dots,N.
\end{equation}

Using the balance condition on edges and angle holonomies $\lambda$, $\mu$ of longitude and meridian, we can solve the equations (\ref{new P equation}) in terms of $\ddl$ and $\ddm$ and obtain a factor of the $\operatorname{PSL}(2, \mathbb{C})$ A-polynomial.
\end{theorem}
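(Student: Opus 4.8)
The plan is to reduce the statement to Theorem~\ref{champanerkar theorem} (Champanerkar) by producing a dictionary between the edge equations $E_{T_j}$ and Thurston's gluing data for the shape parameters $z_j,z_j',z_j''$. For a positive tetrahedron $T_j$, dividing $E_{T_j}$ by the monomial $X_\gamma^{(j)}$ gives $\ddb_j^{-1}X_\alpha^{(j)}/X_\gamma^{(j)} + \dda_j X_\beta^{(j)}/X_\gamma^{(j)} - 1 = 0$, which already has the shape of (\ref{hyperbolic eq 2}). I set $z_j := \dda_j\,X_\beta^{(j)}/X_\gamma^{(j)}$, $z_j' := \ddb_j\,X_\gamma^{(j)}/X_\alpha^{(j)}$, $z_j'' := \ddc_j\,X_\alpha^{(j)}/X_\beta^{(j)}$ (and the mirrored assignment, swapping the roles of $\dda_j$ and $\ddb_j$, for a negative $T_j$, which accounts for the asymmetry of (\ref{new P equation})). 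With this choice $E_{T_j}$ is exactly (\ref{hyperbolic eq 2}), while (\ref{hyperbolic eq 1}), $z_jz_j'z_j'' = -1$, holds automatically because $a_j+b_j+c_j=\varpi=(1,-1)$ forces $\dda_j\ddb_j\ddc_j=-1$.

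\textbf{The change of variables (the hard part).} The theorem amounts to the assertion that there is a monomial rescaling $x_k = \eta_k y_k$, with $\eta_k$ a Laurent monomial in the $\dda_j,\ddb_j,\ddc_j$, after which each of the ratios $X_\alpha^{(j)}/X_\gamma^{(j)}$, $X_\beta^{(j)}/X_\gamma^{(j)}$ becomes the bare monomial in the $y$'s times exactly the reciprocal of the angle standing in front of it, so that $E_{T_j}$ collapses to the pure relation $E'_{T_j}$, i.e.\ (\ref{hyperbolic eq 2}) with $z_j=\dda_j$, $z_j'=\ddb_j$, $z_j''=\ddc_j$. Writing this requirement tetrahedron by tetrahedron produces a linear system for the exponent vector of $\eta$, one equation per $T_j$, whose coefficient data is read off from the incidences of edges in the three opposite-edge pairs of each tetrahedron, i.e.\ from the Neumann--Zagier matrices of $\mathcal T$. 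I expect the consistency of this system --- that the exponent bookkeeping closes up around every edge, so that $\eta$ is well defined on all of $\Delta_1(\mathcal T)$ --- to be the main obstacle; it should follow from the symplectic relations among the Neumann--Zagier matrices (equivalently, from the compatibility of the gluing variety with the constraint $z_jz_j'z_j''=-1$), together with the reduction of the three shape monomials of each tetrahedron to two independent ones via $\dda_j\ddb_j\ddc_j=-1$.

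\textbf{Translate the boundary data, then invoke Champanerkar.} Once the rescaling is in place I check that the $\mathsf B$-component of the balance condition attached to an edge $e$ is precisely the Thurston gluing equation (\ref{gluing equation z}) for $e$ under the identification above --- the exponents $a_{ij},b_{ij},c_{ij}$ being exactly the multiplicities with which $e$ meets the three opposite-edge pairs of $T_j$ --- and that the angle-holonomy formulas for $\lambda$ and $\mu$ of the half-longitude and meridian become, in $\mathsf B$-components, the cusp equations (\ref{cusp equation}) with $\ell=\ddl$ and $m=\ddm^{1/2}$; the square root is forced by working with half the longitude and reproduces the $\ddm^{\frac12}$ appearing in the examples of Section~\ref{section 3}, while the sign normalizations (such as the $-\ddl$ in $A_{\mathbf 4_1}$) come from the $-1$ in $\varpi$ and the orientation convention on $m$ and $l$. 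At this point the combined system (\ref{new P equation}) together with the balance conditions and the holonomy equations is literally the system (\ref{hyperbolic eq 1}), (\ref{hyperbolic eq 2}), (\ref{gluing equation z}), (\ref{cusp equation}) of Theorem~\ref{champanerkar theorem}, so eliminating the shape parameters (equivalently the $y_k$) in favour of $m$ and $\ell$ yields a factor of the $\operatorname{PSL}(2,\mathbb C)$ A-polynomial of $K$. The only remaining point is that the monomial substitution can introduce at most extra monomial factors in $\ddl,\ddm$, which are harmless: they are absorbed into the prefactor $\|\mathcal F_K(\ddl,\ddm)\|$ of (\ref{the edge type partition function}) and do not change the zero locus inside $\delta_{\mathsf F}$.
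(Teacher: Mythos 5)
Your overall route coincides with the paper's: decorate the opposite-edge monomials with the $\mathsf{B}$-parts of the angles so that each equation $E_{T_j}$ of (\ref{old P equation}) takes the shape of Thurston's equation (\ref{hyperbolic eq 2}), check that the balance conditions and the angle holonomies turn into the gluing equations (\ref{gluing equation z}) and cusp equations (\ref{cusp equation}), and then invoke Theorem \ref{champanerkar theorem}. The genuine gap is exactly the step you yourself label ``the hard part'': the existence of the rescaling vector $\eta$ is asserted (``I expect the consistency \dots it should follow from the symplectic relations'') but never established, and that existence is the substantive content both of the statement and of the paper's proof. Your bookkeeping is also off: requiring that \emph{both} ratios $X_\alpha^{(j)}/X_\gamma^{(j)}$ and $X_\beta^{(j)}/X_\gamma^{(j)}$ collapse onto the reciprocal angles imposes two monomial conditions per tetrahedron, i.e.\ $2N$ linear equations on the $N$ exponents of $\eta$, so the system is overdetermined and its solvability is precisely what has to be proved, not expected.

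For comparison, the paper resolves this as follows. With $A,B,C$ the incidence (Neumann--Zagier) matrices and $A'=A-C$, $B'=B-C$, the two families of conditions on the formal logarithm $\vec{E}$ of $\eta$ read $-\vec{B}+A'^{T}\vec{E}=\vec{P}$ and $\vec{A}+B'^{T}\vec{E}=\vec{Q}$. One chooses a quad making $B'$ invertible, solves the second family by $\vec{E}=(B'^{T})^{-1}(\vec{Q}-\vec{A})$, and verifies the first family using (i) the symmetry of $B'^{-1}A'$, which comes from $(A'\;B')$ being the upper half of a symplectic matrix, (ii) the relation $A'\vec{A}+B'\vec{B}=\mathbf{2}-C\mathbf{1}$ extracted from the balance condition, and (iii) an explicit cancellation identity $\prod_j\bigl(\dda_j Y_\gamma^{(j)}/Y_\beta^{(j)}\bigr)^{A'_{ij}}\bigl(\ddb_j Y_\alpha^{(j)}/Y_\gamma^{(j)}\bigr)^{B'_{ij}}=e^{(\mathbf{2}-C\mathbf{1})_i}$, which holds because the exponent $(c_{ij}-b_{ij})a_{ij}+(a_{ij}-c_{ij})b_{ij}+(b_{ij}-a_{ij})c_{ij}$ vanishes. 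Moreover, the fact that the residual monomials $\vec{P},\vec{Q}$ depend only on $\ddl$ and $\ddm$ --- needed so that $\eta$ depends only on $\ddot{\theta}$, as the theorem asserts --- is not something you can dismiss as ``harmless factors absorbed into $\|\mathcal{F}_K\|$''; it is required for the reduction itself, and the paper appeals to Howie's argument for it. Your dictionary $z_j=\dda_j X_\beta^{(j)}/X_\gamma^{(j)}$, $z_j'=\ddb_j X_\gamma^{(j)}/X_\alpha^{(j)}$, $z_j''=\ddc_j X_\alpha^{(j)}/X_\beta^{(j)}$ and the translation of the boundary data are consistent with the paper's intent, but without carrying out the linear-algebra step above the proof is incomplete.
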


\begin{proof}
Firstly, we consider an ideal triangulation with $N$ positive tetrahedra. Combining the balance condition and the angle holonomy corresponding to the meridian, we can obtain the following $N$ equations
\begin{equation}
    \prod_{j=1}^N \dda_j^{a_{ij}}\ddb_j^{b_{ij}}\ddc_j^{c_{ij}}=1,\quad j=1,\dots,N.
\end{equation}
Set $A:=(a_{ij})$, $B:=(b_{ij})$, $C:=(c_{ij})$, $A':=A-C$ and $B':=B-C$. Using the theory of Neumann-Zagier\cite{Neumann1985VolumesOH} we can assume that $A'$ and $B'$ satisfy 
\begin{enumerate}
    \item $(A'\quad B')$ is the upper half of a symplectic $2 N \times 2 N$ matrix;
    \item there exists a quad such that $B'$ is invertible\cite[Lem.A.3]{Dimofte:2012qj}.
\end{enumerate}

Let $e_i \in \mathbb{Z}^N$ denote the $i$-th coordinate vector. We use the shorthand notation $\vec{z}^{\vec{v}}:=\prod^{N}_{i=1}z_i v_i$. One can see that
$$
X_{\alpha}^{(j)}=\vec{x}^{Ae_j},\quad 
X_{\beta}^{(j)}=\vec{x}^{Be_j},\quad 
X_{\gamma}^{(j)}=\vec{x}^{Ce_j},
$$
where $\vec{x}=(x_1,\dots,x_N)$. This is because matrices $A$, $B$, and $C$, indexed by edges and tetrahedra, record the count of each shape of the $j$-th tetrahedron appearing around the $i$-th edge.

Denote by $\vec{A}$, $\vec{B}$ and $\vec{E}$ the formal logarithm of $\vec{\dda}=(\dda_1,\dots,\dda_N)$, $\vec{\ddb}=(\ddb_1,\dots,\ddb_N)$ and $\vec{\eta}=(\eta_1,\dots,\eta_N)$. Set $\vec{y}=(y_1,\dots, y_N)$. Substituting $\vec{x}=\vec{y}^{\vec{\eta}}$ into equations (\ref{old P equation}) we obtain
\begin{equation}
\begin{aligned}
    &e^{-B_j+e_j^{T}(A-C)^{T}\vec{E}}\vec{y}^{(A-C)e_j}+e^{A_j+e_j^{T}(B-C)^{T}\vec{E}}\vec{y}^{(B-C)e_j}-1\\
    =&e^{-B_j+e_j^{T}A'^{T}\vec{E}}\vec{y}^{A'e_j}+e^{A_j+e_j^{T}B'^{T}\vec{E}}\vec{y}^{B'e_j}-1=0.\quad j=1,\dots,N.\\
\end{aligned}
\end{equation}
We want to find a vector $\vec{E}$, which is dependent only on $\ddot{\theta}$, such that
\begin{equation}\label{the equation of vector E}
\begin{aligned}
        -\vec{B}+A'^{T}\vec{E}=&\vec{P},\\
        \vec{A}+B'^{T}\vec{E}=&\vec{Q},\\
\end{aligned}
\end{equation}
where $e^{P_j}=\ddb_j^{-1}\vec{y}^{-A'e_j}$ and $e^{Q_j}=\dda_j\vec{y}^{-B'e_j}$. Note the fact that $\vec{P}$ and $\vec{Q}$ are dependent only on $\ddl$ and $\ddm$ following from the argument in \cite[Prop.2.53]{howie2021apolynomials} for the variables $y_k$. 

The solution of the second equation of (\ref{the equation of vector E}) is 
\begin{equation}\label{the solution of vector E}
 \vec{E}=(B'^{T})^{-1}(\vec{Q}-\vec{A}).   
\end{equation}
Substituting (\ref{the solution of vector E}) into the first equation of (\ref{the equation of vector E}) we obtain
$$
\begin{aligned}
    &-\vec{B}+A'^{T}((B'^{T})^{-1}(\vec{Q}-\vec{A}))\\
    =&B'^{-1}(C\mathbf{1}-\mathbf{2})+B'^{-1}A'\vec{A}+A'^{T}(B'^{T})^{-1}\vec{Q}-A'^{T}(B'^{T})^{-1}\vec{A}\\
    =&B'^{-1}(C\mathbf{1}-\mathbf{2})+B'^{-1}A'\vec{A}+B'^{-1}A'\vec{Q}-B'^{-1}A'\vec{A}\\
    =&B'^{-1}(C\mathbf{1}-\mathbf{2}+A'\vec{Q})\\
    =&B'^{-1}B'\vec{P}=\vec{P}.
\end{aligned}
$$
The first equality follows from the fact that
$$
A'\vec{A}+B'\vec{B}=\mathbf{2}-C\mathbf{1}.
$$
The second equality follows from the fact that $(A'\quad B')$ is the upper half of a symplectic matrix, hence $B'^{-1}A'$ is symmetric. The second-to-last equality follows from
$$
\begin{aligned}
&\prod^N_{j=1}(\dda_j\frac{Y_\gamma^{(j)}}{Y_\beta^{(j)}})^{A'_{ij}}(\ddb_j\frac{Y_\alpha^{(j)}}{Y_\gamma^{(j)}})^{B'_{ij}}\\ 
=&\prod^{N}_{j=1}(\frac{Y_\gamma^{(j)}}{Y_\beta^{(j)}})^{a_{ij}}(\frac{Y_\alpha^{(j)}}{Y_\gamma^{(j)}})^{b_{ij}}(\frac{Y_\beta^{(j)}}{Y_\alpha^{(j)}})^{c_{ij}}\prod^{N}_{j=1}\dda_j^{A'_{ij}}\ddb_j^{B'_{ij}}\\
=&\prod^{N}_{j=1}\prod^{N}_{i=1}y^{(c_{ij}-b_{ij})a_{ij}+(a_{ij}-c_{ij})b_{ij}+(b_{ij}-a_{ij})c_{ij}}\prod^{N}_{j=1}\dda_j^{A'_{ij}}\ddb_j^{B'_{ij}}\\
=&\prod^{N}_{j=1}\dda_j^{A'_{ij}}\ddb_j^{B'_{ij}}=e^{(\mathbf{2}-C\mathbf{1})_i}.\\
\end{aligned}
$$

Thus, after set $\vec{x}=\vec{y} ^{\vec{\eta}}$, where $\vec{\eta}=e^{\vec{E}}$ and $E$ is defined as (\ref{the solution of vector E}), the equations (\ref{old P equation}) transform into the equations (\ref{new P equation}). Now we have four equations for $\ddot{\theta}$:
\begin{enumerate}
    \item $\dda\ddb\ddc=-1$;
    \item $\dda+\ddb^{-1}-1=0$;
    \item balanced condition;
    \item cusped holonnomies for $\ddl$ and $\ddm$.
\end{enumerate}

Finally, Theorem \ref{champanerkar theorem} shows that when we solve the above equations in terms of $\ddl$ and $\ddm$, we obtain a divisor of the $\operatorname{PSL}(2, \mathbb{C})$ A-polynomial of $X$. 

For the case of negative tetrahedra, swap the positions of $a$ and $b$, which leads to the second equation of (\ref{new P equation}). This completes the proof.
\end{proof}

\begin{remark}
Observe that the function $\mathcal{F}_K$ has an interesting form
$$
\begin{aligned}
    \mathcal{F}_{\mathbf{3}_1}(x,y)=&\|xy^4\|,\\
    \mathcal{F}_{\mathbf{4}_1}(x,y)=&\|xy^4(y-y^{-1})\|,\\
    \mathcal{F}_{\mathbf{5}_2}(x,y)=&\|xy^4(y-y^{-1})(1+xy^3)\|.\\
\end{aligned}
$$
\end{remark}
We will further investigate these new knot invariants in future studies.
\printbibliography

\end{document}